\font\tenmath=msbm10
\font\sevenmath=msbm7
\font\fivemath=msbm5
\newcommand{\cB}{\mathcal{B}}
\newcommand{\cF}{\mathcal{F}}
\newcommand{\cS}{\mathcal{S}}
\newcommand{\fP}{\mathbf{P}}
\newcommand{\E}{\mathbf{E}}
\newcommand{\var}{{\bf Var}}
\newcommand{\fPu}{\fP_{\text{unif}}}
\newcommand{\fPp}{\fP_{\text{planted}}}
\def\namedlabel#1#2{\begingroup
   \fPtect\def\@currentlabel{#2}%
   \label{#1}\endgroup
}
\newcommand{\BlackBox}{\rule{1.5ex}{1.5ex}}
\renewenvironment{proof}{\par\noindent{\bfseries\upshape
  Proof\ }}{\hfill\BlackBox\\[2mm]}
\newtheorem{theorem}{Theorem}[section]
\newtheorem{lemma}[theorem]{Lemma}
\begin{document}

\begin{frontmatter}

\title{Optimal Testing for Planted Satisfiability Problems}
\runtitle{Optimal Testing for Planted Satisfiability Problems}


 \author{Quentin Berthet\thanksref{t1,t2}\ead[label=e1]{qberthet@caltech.edu}}

\thankstext{t1}{The author thanks Philippe Rigollet, Emmanuel Abb\'e, Dan Vilenchick and Amin Coja-Oghlan for very helpful discussions.}
\thankstext{t2}{Partially supported by NSF grant CAREER-DMS-1053987 when the author was at Princeton University, and by AFOSR grant
FA9550-14-1-0098.}


 \address{{Department of Computing} \\
{ and Mathematical Sciences}\\
{California Institute of Technology}\\
{Pasadena, CA 91125, USA}\\
 \printead{e1}}

\runauthor{Q. Berthet}

\begin{abstract}
We study the problem of detecting planted solutions in a random satisfiability formula. Adopting the formalism of hypothesis testing in statistical analysis, we describe the minimax optimal rates of detection. Our analysis relies on the study of the number of satisfying assignments, for which we prove new results. We also address algorithmic issues, and give a computationally efficient test with optimal statistical performance. This result is compared to an average-case hypothesis on the hardness of refuting satisfiability of random formulas.
\end{abstract}

\begin{keyword}[class=AMS]
\kwd[Primary ]{62C20}
\kwd[; secondary ]{68R01, 60C05}
\end{keyword}

\begin{keyword}
\kwd{Satisfiability problem, High-dimensional detection, Polynomial-time algorithms}
\end{keyword}

\tableofcontents

\end{frontmatter}

\section*{Introduction}

We study in this paper the problem of detecting a planted solution in a random $k$-{\sf SAT} formula of $m$ clauses on $n$ variables. This is formulated as a hypothesis testing problem: Given a formula $\phi$, our goal is to decide whether it is a typical instance, drawn uniformly among all formulas, or if it has been drawn such that it is guaranteed to be satisfiable, by planting a solution.

There is a resurgence in statistics of hypothesis testing problems, i.e., distinguishing null hypotheses with pure noise, against the presence of a structured signal in a high-dimensional setting. The seminal work of \cite{Ing82,Ing98,DonJin04}, on the problem of detecting sparse or weakly sparse signals in high dimension has inspired a wide literature of detection problems. Examples include  \cite{IngTsyVer10} in the context of sparse linear regression, \cite{AriCanDur11,ButIng13,AriVer13,MaWu13} for small cliques or communities in graphs and matrices,  \cite{AddBroDev10} for general combinatorial structured signals, and \cite{AriBubLug12,BerRig12,BerRig13} for sparse principal components of covariance matrices. These problems are combinatorial in nature, and the complexity of the class of possible signals (sparse vectors, cliques in a graph, small submatrices, or here the $n$-dimensional hypercube) has a direct influence on the statistical and algorithmic difficulties of the detection problem. 

Minimax theory gives a formal definition of the statistical complexity of a hypothesis testing problem, in terms of the sample size needed to identify with high probability the underlying distribution of given instances. 
It describes the interplay between the interesting parameters of a problem: sample size, ambient dimension, signal-to-noise ratio, sparsity, underlying dimension, etc.

This framework is particularly adapted to the study of random instances of $k$-{\sf SAT} formulas: a random formula $\phi$ can be interpreted as $m$ independent, identically distributed clauses, each on $k$ of the $n$ variables. The uniform distribution is equivalent to pure noise, the absence of signal. Planting a solution is equivalent to changing the distribution of the clauses, dependent on an assignment $x \in \{0,1\}^n$. This planted satisfying assignment is the signal whose presence we seek to detect. The optimal rate of detection will describe how large $m$ (the sample size) needs to be for detection to be possible, as a function of $n$ (the ambient dimension), and $k$, treated as a constant.

The properties of random instances of uniform $k$-{\sf SAT} formulas have been widely studied in the probability and statistical physics literature. Particular attention has been paid to the notions of satisfiability thresholds (sharp changes of behavior when the clause-to-variable density ratio $\Delta=m/n$ varies) \cite{AchPer04,AchMoo06,Coj09,CojPan12,Coj13,DinSlySun14}, maximum satisfiability \cite{AchPerNao03} geometry of the space of solutions \cite{AchPerNao03,AchRic06,AchCoj08,KrzMonRicc06,MonResTel09}, and concentration of specific statistics \cite{AbbMon10,AbbMon13}. The planted distribution has also been studied, often in order to create random instances that are known to be satisfiable, such as in \cite{BarHarLeo01,HaaJarKas05,AchGomKau00,AchJiaMoo04,AchCoj08,JiaMooStr05}, and at high density in \cite{AltMonZam06,CojKriVil07,FeiMosVil06}. Methods from statistical physics such as belief and survey propagation have been applied to this problem and rigorously studied \cite{BraMezZec02,MezParZec02,MezZec02,Coj10}. More recently, the algorithmic complexity (in a specific computational model) of estimating the planted assignment has been studied in \cite{FelPerVem13}.

Here, the use of tools from statistical analysis, such as the likelihood ratio and the total variation distance, highlights the importance of a specific statistic: the number of satisfying assignments. More specifically, we study its deviations from its expected value. Optimal rates of detection are obtained by proving new results concerning the concentration (or absence thereof) of this statistic. 
We address algorithmic issues by showing that the optimal rates of detection can be obtained by a newly introduced polynomial-time test. We also show the effect of choosing a different planting distribution on the detection problem, particularly on the optimal rates of detection. 

The following subsection introduces notations for $k$-{\sf SAT} formulas. Our hypothesis testing problem is formally described in Section~\ref{SEC:pdesc}. The optimal rates of detection are derived in Section~\ref{SEC:opttest}, and the problem of testing in polynomial time is addressed in Section~\ref{SEC:poly}. The effect on the detection rates of different choices for the planting distributions is studied in Section~\ref{SEC:alttest}.

\subsection*{Notations for  $k$-{\sf SAT} formulas}

Let $n$ and $m$ be positive integers. For all fixed positive integers $k$, we denote by $\cF_{n,m}^k$ the set of boolean formulas on $n$ variables that are the conjunction of $m$ disjunctions of $k$ distinct literals. Formally, for all $\phi \in \cF_{n,m}^k$, we have for all $x \in \{0,1\}^n$ 
$$\phi(x) = \bigwedge_{i=1}^m C_i(x)\, ,$$
where for all $i \in \{1,\ldots,m\}$, the clause $C_i$ is the disjunction of $k$ literals on $k$ distinct variables, i.e., the value of a variable or its negation
$$C_i(x) = \ell_{i,1}\vee \ldots \vee \ell_{i,k}\, , \; \ell_{i,j} \in \{x_1,\bar x_1,\ldots, x_n, \bar x_n \}\, , \text{and } \ell_{i,j} \notin \{\ell_{i,j'},\bar \ell_{i,j'}\} .$$

The $k$-{\sf SAT} problem (short for satisfiability) is the decision problem of determining whether a given formula $\phi$ is satisfiable, i.e., if there exists $x \in \{0,1\}^n$ such that $\phi(x)$ evaluates to  {\sf 'true'}. For a given $k$-{\sf SAT} formula $\phi$, we denote by $\cS(\phi)$ the set of satisfying assignments
$$\cS(\phi) = \big\{x \in \{0,1\}^n : \phi(x) = \, \text{{\sf 'true'}} \big\}\, ,$$
and by $Z(\phi) = |\cS(\phi)|$ the number of satisfying assignments for $\phi$. We often write $Z$ when it is not ambiguous. For a subset $S$ of $\{1,\ldots,m\}$, we define the sub-formula
$$\phi_S = \bigwedge _{i \in S} C_i \, .$$
The definition of satisfying assignments extends to single clauses  and sub-formulas in general, with the notations $\cS(C_i)$ and $\cS(\phi_S)$ for the set of assignments satisfying respectively, the clause $C_i$ or the formula $\phi_S$. We denote by ${\sf SAT}$ the set of satisfiable formulas: those with satisfying assignments.

\section{Problem description}
\label{SEC:pdesc}

We are interested in distinguishing two distributions on $\cF^k_{m,n}$, the \textit{uniform}, and \textit{planted} distributions. The uniform distribution, denoted by $\fP_{\text{unif}}$, is generated by independently selecting each clause uniformly from the $2^k {n \choose k}$ possible choices. The planted distribution, denoted by $\fP_{\text{planted}}$, is generated by randomly selecting an assignment $x^*$ uniformly among the $2^n$ elements of $\{0,1\}^n$, and then independently selecting all the clauses among the $(2^k-1) {n \choose k}$ clauses that are satisfied by $x^*$ (denoted by $\fP_{x^*}$). Each clause is given as $k$ literals, in a uniformly random order. 
We represent this as a hypothesis testing problem, on the observation $\phi \in \cF^k_{m,n}$
\begin{eqnarray*}
H_0&:& \phi \sim \fP_{\text{unif}} \\
H_1&:& \phi \sim \fP_{\text{planted}} = \frac{1}{2^n} \sum_{x \in \{0,1\}^n} \fP_x \, .
\end{eqnarray*}
It is also possible to consider the detection problem with composite alternative hypothesis over the $\fP_x$. Our formulation is equivalent to choosing a uniform prior over the planted assignments, and to consider the distribution $\fP_{\text{planted}}$, mixture of the $\fP_x$.
We will mention two regimes: the {\em linear regime}, when $m= \Delta n$, for some $\Delta >0$, usually the only one considered in the probability theory literature; and the {\em square-root regime}, when $m= C \sqrt{n}$, for some $C>0$, particularly relevant to the study of our statistical problem. We will often consider $m,n$ large enough, but will mainly focus on non-asymptotic results.\\

We define a test as a measurable function $\Psi:\cF^k_{m,n} \rightarrow \{0,1\}$, whose goal is to determine the underlying distribution of the observation $\phi$. We define the probability of error as the maximum of the probabilities of type I and type II error, formally
$$\fP_{\text{unif}}(\Psi(\phi)=1) \vee \fP_{\text{planted}}(\Psi(\phi)=0)\, .$$
This quantity is used here to measure the success of any test $\Psi$. We will consider that a test is successful when its probability of error is smaller than $\delta \in (0,1)$, considered fixed for the whole problem, such as $\delta = 0.05$.\\

We can make the simple observation that under the planted distribution, formulas are {\em guaranteed} to be satisfiable. This suggests to test satisfiability of the formula in order to solve the hypothesis testing problem. This test has a probability of error of type II equal to zero. Under the uniform distribution, the behavior of $\fP_{\text{unif}}(\phi \in \text{\sf SAT})$ has been extensively studied, and a phase transition has been shown to exist in the linear regime of $m=\Delta n$, from satisfiability to unsatisfiability, around some $\Delta_k$ close to $2^k \log(2)$. We refer to \cite{CojPan12,Coj13} and references therein for more information, as well as \cite{DinSlySun14} for a proof of the sharpness of the phase transition, for $k$ large enough.  
%
In this setting, when $\Delta >  \Delta_k$, the satisfiability test $\Psi_\text{\sf SAT} = \mathbf{1}\{\cdot \in \text{\sf SAT}\}$ has a probability of error going to 0, and when $\Delta <  \Delta_k$, the error will converge to 1 (entirely because of the probability of a type I error).

When thinking of the formula $\phi$ as a sequence of $m$ i.i.d. clauses, $m$ can be interpreted as the sample size, and the problem becomes easier when $\Delta$ increases. When $\Delta$ is too small, the probability of error of the test $\Psi_\text{\sf SAT}$ converges to 1. 
We see in the following section that this simple rate can be significantly improved. 

\section{Optimal testing}
\label{SEC:opttest}

In this section, we derive the optimal rate of detection for this problem, i.e., how large $m$ should be for a test to be able to distinguish with high probability the two hypotheses. We prove that the {\em likelihood-ratio test} is successful in the square-root regime, and show that it is information-theoretic optimal.

\subsection{Likelihood-ratio test}
A test based on the likelihood ratio between the two candidate distributions can distinguish between them with high probability, in the square-root regime. When $m \ge C\sqrt{n}$ for a specific constant $C$, the probability of error of the likelihood-ratio test is smaller than $\delta \in (0,1)$.

\begin{theorem}
\label{THM:uppersqr}
For all $k \ge 2$, positive $m,n$, denote $\Psi_\text{\sf LR}$ the likelihood-ratio test defined by
\begin{equation}
\label{EQN:lrtest}
\Psi_\text{\sf LR}(\phi) = \mathbf{1}\{Z(\phi) > \E_{\normalfont \text{unif}}[Z] \}\, .
\end{equation}
For any $\delta \in (0,1)$, there exists $\bar C_{k,\delta}>0$ such that for $m \ge \bar C_{k, \delta} \sqrt{n}$, for $m,n$ large enough, it holds
$$\fP_{\normalfont\text{unif}}(\Psi_\text{\sf LR}(\phi)=1) \vee \fP_{\normalfont \text{planted}}(\Psi_\text{\sf LR}(\phi)=0)\le \delta\, .$$ 
 
\end{theorem}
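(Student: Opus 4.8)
The plan is to analyze the statistic $Z(\phi)$ under both hypotheses and show that the threshold $\E_{\text{unif}}[Z]$ separates the two distributions with high probability once $m \gtrsim \sqrt{n}$. First I would compute the two first moments. Under $\fP_{\text{unif}}$, each clause is satisfied by a fixed assignment $x$ with probability $p := (2^k-1)/2^k$, independently across clauses, so $\E_{\text{unif}}[Z] = 2^n p^m$. Under $\fP_{\text{planted}}$, conditionally on the planted assignment $x^*$, every clause is drawn uniformly among those satisfied by $x^*$; hence an arbitrary assignment $x$ at Hamming distance $d$ from $x^*$ satisfies a random planted clause with a probability $q_d$ that I would compute explicitly by inclusion–exclusion on the $k$-subsets of coordinates (for $d=0$ this probability is $1$). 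Summing over $x$ and using the binomial distribution of $d$ gives $\E_{\text{planted}}[Z]$; the $x^*$ term alone contributes $1$, and the cluster of assignments near $x^*$ contributes a surplus over $\E_{\text{unif}}[Z]$. The key elementary inequality I expect to need is that $\E_{\text{planted}}[Z] - \E_{\text{unif}}[Z]$ is positive and, in the square-root regime, comparable to (or larger than) the standard deviation of $Z$ under $\fP_{\text{unif}}$.

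Next I would control the fluctuations of $Z$ under $\fP_{\text{unif}}$ by a second-moment (variance) computation. Writing $Z = \sum_{x} \mathbf{1}\{x \in \cS(\phi)\}$, the second moment factorizes over clauses: $\E_{\text{unif}}[Z^2] = \sum_{x,y} \rho(x,y)^m$ where $\rho(x,y)$ is the probability that one uniform clause is satisfied by both $x$ and $y$, a function of the Hamming distance between $x$ and $y$ only. A direct estimate shows $\rho(x,y) \le p^2$ except when $x,y$ are very close, and carrying out the sum yields $\var_{\text{unif}}(Z) \le \E_{\text{unif}}[Z]^2 \cdot \big(e^{O(m^2/n)} - 1\big)$ up to lower-order corrections; in the regime $m = O(\sqrt n)$ this is $O(\E_{\text{unif}}[Z]^2)$, but more importantly the multiplicative constant can be made as close to the threshold level as needed by tuning $\bar C_{k,\delta}$. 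Then Chebyshev's inequality bounds $\fP_{\text{unif}}(Z > \E_{\text{unif}}[Z](1+t))$, controlling the type I error: for $m \ge \bar C_{k,\delta}\sqrt n$ one can take the deviation $t$ large enough that this probability is below $\delta$.

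For the type II error I would again use Chebyshev, now under $\fP_{\text{planted}}$, but around the planted mean: I need $\fP_{\text{planted}}(Z \le \E_{\text{unif}}[Z])$ small, which follows provided $\E_{\text{planted}}[Z]$ exceeds $\E_{\text{unif}}[Z]$ by enough standard deviations of $Z$ under $\fP_{\text{planted}}$. This requires a variance bound under the planted model, computed the same way via clause-independence conditionally on $x^*$: $\E_{\text{planted}}[Z^2 \mid x^*] = \sum_{x,y} \big(\text{prob. a planted clause satisfies both } x,y\big)^m$, again a function of the two distances to $x^*$ and the distance between $x,y$. The bookkeeping is heavier here because there are two "special" points, $x^*$ and the bulk, but the same exponential-in-$m^2/n$ estimates apply; one also adds the contribution of $\var_{x^*}(\E[Z\mid x^*])$, which is small. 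Choosing $\bar C_{k,\delta}$ large makes the planted-minus-uniform mean gap dominate all these standard deviations.

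The main obstacle I anticipate is the variance bookkeeping, especially under the planted distribution: one must show that the pair-correlation sums $\sum_{x,y}\rho(\cdot)^m$ are dominated by the "diagonal-ish" terms (pairs $x,y$ close together, or close to $x^*$), quantify the geometric series of near-diagonal contributions, and verify that in the square-root regime these do not blow up but instead stay within a constant factor that can be absorbed by enlarging $\bar C_{k,\delta}$. A secondary point of care is making sure the surplus $\E_{\text{planted}}[Z] - \E_{\text{unif}}[Z]$ is genuinely of the right order (roughly $\E_{\text{unif}}[Z]\cdot(e^{\Theta(m^2/n)}-1)$ or at least $\E_{\text{unif}}[Z]$ once $m \asymp \sqrt n$), rather than being swamped by $\E_{\text{unif}}[Z]$ itself — this is precisely where the constant $C$ in $m = C\sqrt n$ enters, and where the square-root scaling is forced. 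Once these moment estimates are in hand, both error probabilities fall out of Chebyshev with $\bar C_{k,\delta}$ chosen large enough in terms of $k$ and $\delta$.
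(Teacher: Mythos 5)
There is a genuine gap, and it sits at the heart of both halves of your plan: the second-moment method cannot control either error probability for this test in the square-root regime. For the type I error, the test rejects at the threshold \emph{exactly equal to} $\E_{\text{unif}}[Z]$, and under $\fP_{\text{unif}}$ the ratio $\var(Z)/\E[Z]^2$ does not shrink as you increase $\bar C_{k,\delta}$ --- your own estimate gives $\var_{\text{unif}}(Z)\approx \E[Z]^2\,(e^{\Theta_k(C^2)}-1)$, which \emph{grows} with $C$. Chebyshev (or Cantelli) at a threshold equal to the mean is vacuous, and more fundamentally no bound on the first two moments alone can certify $\fP_{\text{unif}}(Z>\E[Z])\le\delta$: a distribution can put most of its mass just above its mean while having tiny variance. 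The true mechanism is the opposite of concentration around the mean: in this regime $Z$ behaves log-normally, and with probability $1-O(1/C^2)$ one has $Z< e^{-\gamma_k C^2/C_0}\,\E_{\text{unif}}[Z]$ (the quenched/annealed gap); the expectation is carried by rare formulas with huge $Z$. The paper proves exactly this (Lemma~\ref{LEM:divfinite}) by conditioning on which variables repeat as first literals and on their sign pattern, showing the \emph{conditional} expectation of $Z$ is exponentially small compared to $\E[Z]$, and finishing with Markov. That argument, not a variance bound, is what makes the type I error small.

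The planted half fails for a dual reason. Since the likelihood ratio is $Z/\E_{\text{unif}}[Z]$, the planted law is the $Z$-size-biased uniform law, so $\E_{\text{planted}}[Z]=\E_{\text{unif}}[Z^2]/\E_{\text{unif}}[Z]$ and $\E_{\text{planted}}[Z^2]=\E_{\text{unif}}[Z^3]/\E_{\text{unif}}[Z]$. The third moment under $\fP_{\text{unif}}$ involves triples of assignments and grows like $\E[Z]^3 e^{3c_kC^2}$, while $\E_{\text{planted}}[Z]^2\approx\E[Z]^2e^{2c_kC^2}$; hence $\sqrt{\var_{\text{planted}}(Z)}$ \emph{dominates} the mean gap $\E_{\text{planted}}[Z]-\E_{\text{unif}}[Z]$, and gets worse as $C$ grows, so Chebyshev under the alternative is also unavailable. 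The paper sidesteps any analysis of the planted distribution entirely: for the likelihood-ratio test the sum of the two errors equals $1-d_{TV}(\fP_{\text{unif}},\fP_{\text{planted}})$, and $d_{TV}=\E_{\text{unif}}\big[(1-Z/\E[Z])_+\big]$, which is close to $1$ precisely because $Z\ll\E[Z]$ with high probability under the null. If you want to salvage your outline, you must replace both Chebyshev steps by (i) this total-variation identity and (ii) a proof that $Z/\E_{\text{unif}}[Z]$ is typically exponentially small, e.g.\ via the conditional first-moment argument of the appendix.
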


\begin{proof}
We first prove that the likelihood-ratio test has indeed form (\ref{EQN:lrtest}). For discrete distributions, the likelihood ratio is simply equal to the ratio of the two distributions. For all $\phi \in \cF^k_{m,n}$, it holds
$$
\frac{\fP_{\text{planted}}(\phi) }{\fP_{\text{unif}}(\phi) } = \frac{1}{2^n}\sum_{x \in \{0,1\}^n} \frac{\fP_{x}(\phi)}{\fP_{\text{unif}}(\phi)}\, .
$$
To compute the probabilities in the above ratios, we can interpret the drawing of $\phi$ by placing $m$ balls in $N=2^k {n \choose k}$ bins independently - if it has distribution $\fP_{\text{unif}}$ - or otherwise in the $N_k=(2^k-1){n \choose k}$ bins corresponding to clauses that are satisfied by $x$. Therefore, it holds for all $\phi$
\begin{equation*}
\frac{\fP_{x}(\phi)}{\fP_{\text{unif}}(\phi)} = \left\{
    \begin{array}{rl}
      0 &\; \text{if } x \notin \cS(\phi)\\
       \Big(\frac{N}{N_k}\Big)^m &\;\text{otherwise } 
    \end{array} \right.
\end{equation*}
It can  then be expressed in terms of $\mathbf{1}\{x \in \cS(\phi)\}$, and $N/N_k = 1/(1-2^{-k})$
\begin{eqnarray*}
\frac{\fP_{\text{planted}}}{\fP_{\text{unif}}}(\phi) &=& \frac{1}{2^n} \sum_{x \in \{0,1\}^n} \Big(\frac{N}{N_k}\Big)^m \mathbf{1}\{x \in \cS(\phi)\}\\
 &=& \frac{1}{\E_{\text{unif}}[Z(\phi)]} \sum_{x \in \{0,1\}^n} \mathbf{1}\{x \in \cS(\phi)\} =  \frac{Z(\phi)}{\E_{\text{unif}}[Z(\phi)]}\, ,
 \end{eqnarray*}
by the known closed form of $\E_{\text{unif}}[Z(\phi)] = 2^n (1-2^{-k})^m$, which can be directly derived by linearity. The likelihood-ratio test is therefore indeed $\Psi_\text{\sf LR}(\phi) = \mathbf{1}\{Z(\phi) > \E_{\text{unif}}[Z(\phi)] \}$. It is now sufficient to prove $\fP_{\text{unif}}(\Psi(\phi)=1) + \fP_{\text{planted}}(\Psi(\phi)=0)\le \delta$,
as the maximum of two nonnegative numbers is smaller than their sum. By definition of the likelihood-ratio test,
$$\fP_{\text{unif}}(\Psi_\text{\sf LR}(\phi)=1) + \fP_{\text{planted}}(\Psi_\text{\sf LR}(\phi)=0)=1-d_{TV}(\fP_{\text{unif}},\fP_{\text{planted}})\, .$$
Furthermore, by definition of the total variation distance
\begin{eqnarray*}
d_{TV}(\fP_{\text{unif}},\fP_{\text{planted}}) &=& \sum_{\substack{\phi \in \cF^k_{m,n} \\ \fP_{\text{unif}}(\phi) > \fP_{\text{planted}}(\phi)}}\{\fP_{\text{unif}}-\fP_{\text{planted}}\}(\phi)\\
&=& \sum_{\substack{\phi \in \cF^k_{m,n} \\ Z(\phi)/\E[Z]<1}}\Big(1-\frac{Z(\phi)}{\E[Z]}\Big) \fP_{\text{unif}}(\phi)\\
&=& \E_{\text{unif}}\Big[\Big(1-\frac{Z(\phi)}{\E[Z]}\Big)_+ \Big]\, .
\end{eqnarray*}

The total variation distance between distributions of i.i.d. elements being non-decreasing in the sample size, we obtain by Lemma~\ref{LEM:divfinite} that in the square-root regime, for $C$ large enough and $m \ge C\sqrt{n}$, 
$$d_{TV}(\fP_{\text{unif}},\fP_{\text{planted}}) \ge (1-e^{-\gamma_{k}C^2/C_0})(1-C_0/C^2)\, .$$
This bound yields the desired result for some large enough constant $C_{k,\delta}>0$.
\end{proof}

The proof of this theorem indicates that it is possible to distinguish the two distributions whenever $Z$ is not concentrated around its expectation under the uniform distribution. 
Our result is a consequence of the following lemma, that states that in the square-root regime, for a constant $C$ large enough, the ratio $Z/\E[Z]$ is much smaller than 1, with high probability.
\begin{lemma}
\label{LEM:divfinite}
For all $k \ge 2$, $C_0$ an absolute constant, $m = C \sqrt{n}$, and $C, n$ large enough, it holds with probability $1-C_0/C^2$,for some constant $\gamma_k>0$ that
$$Z <  e^{- \gamma_k C^2/C_0} \, \E[Z]\, .$$
\end{lemma}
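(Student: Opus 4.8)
The plan is to prove the lemma by a second-moment / concentration analysis of $Z = Z(\phi)$ under $\fP_{\text{unif}}$. First I would write $Z = \sum_{x \in \{0,1\}^n} \mathbf{1}\{x \in \cS(\phi)\}$ and recall $\E[Z] = 2^n(1-2^{-k})^m$. The key observation is that in the square-root regime $m = C\sqrt n$, the exponent $m \log(1-2^{-k}) = -C 2^{-k}\sqrt n(1+o(1))$ is sublinear in $n$, so $\E[Z] = 2^n e^{-\Theta(C\sqrt n)}$, which is still exponentially large but the ``correction factor'' $e^{-\Theta(C\sqrt n)}$ is what we must control. The point of the lemma is that $Z$ is actually \emph{smaller} than $\E[Z]$ by a further factor $e^{-\gamma_k C^2/C_0}$ with probability $1 - C_0/C^2$; intuitively, the mean is inflated by rare formulas with atypically many satisfying assignments (clusters of nearby solutions), and a typical formula has fewer.

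The main technical step is to estimate the second moment $\E[Z^2] = \sum_{x,y} \fP_{\text{unif}}(x,y \in \cS(\phi))$. For a fixed pair $x,y$ at Hamming distance $d = d_H(x,y)$, a single uniformly random clause is satisfied by both $x$ and $y$ with some probability $p_d$ depending only on $d$, $k$, $n$; writing it out, a clause on $k$ chosen variables fails to be satisfied by $x$ with probability $2^{-k}$, and the inclusion–exclusion over the events ``violates $x$'' and ``violates $y$'' gives $p_d = 1 - 2^{1-k} + 2^{-k}q_d$ where $q_d$ is the probability the unique falsifying literal-pattern for $x$ on the chosen $k$-set is also falsifying for $y$, which is nonzero only when all $k$ chosen variables lie in the agreement set, i.e. $q_d = \binom{n-d}{k}/\binom{n}{k} \approx (1-d/n)^k$. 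Hence $\E[Z^2] = \sum_d \binom{n}{d} 2^n p_d^m$, and comparing to $(\E[Z])^2 = 2^{2n}(1-2^{-k})^{2m}$ one gets
$$\frac{\E[Z^2]}{(\E[Z])^2} = 2^{-n}\sum_{d=0}^n \binom{n}{d}\Big(\frac{p_d}{(1-2^{-k})^2}\Big)^m = \E_{d \sim \mathrm{Bin}(n,1/2)}\Big[\Big(1 + \frac{2^{-k}q_d - 2^{-2k}}{(1-2^{-k})^2}\Big)^m\Big].$$
Since $q_d \le 1$ with $q_d \to 1$ only near $d=0$ and $q_d \to 2^{-k}\cdot(\text{small})$... actually for the bulk $d \approx n/2$ one has $q_d \approx 2^{-k}$ so the bracketed correction is $\approx 0$; the enhancement comes from $d = o(n)$, which has exponentially small binomial weight $\binom{n}{d}2^{-n}$. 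Balancing the binomial tail $e^{-\Theta(d\log(n/d))}$ (or $e^{-\Theta(n)}$ for $d$ linear) against the growth $(1 + \Theta(2^{-k}))^m = e^{\Theta(2^{-k}C\sqrt n)}$ for small $d$ shows the sum is dominated by $d=0$ (contributing $1$) plus a correction; the upshot should be $\E[Z^2]/(\E[Z])^2 = 1 + O(C_0/C^2)$ or similar, for an appropriate reading of the constants — this balancing is where the $C^2/C_0$ scaling and the constant $\gamma_k$ come from, and it is the step I expect to be the real obstacle, since it requires a careful stationary-point estimate of $\sum_d \binom{n}{d}(p_d/(1-2^{-k})^2)^m$ rather than a crude bound.

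Given a second-moment bound of the form $\E[Z^2] \le (1 + \eta)(\E[Z])^2$ with $\eta$ small, Chebyshev gives $\fP_{\text{unif}}(|Z - \E[Z]| \ge t\,\E[Z]) \le \eta/t^2$, which controls the \emph{upper} deviation but by itself only shows $Z \approx \E[Z]$, not $Z \le e^{-\gamma_k C^2/C_0}\E[Z]$. To get the claimed strict smallness I would instead argue that $\E[Z]$ is not the right centering: the typical value of $Z$ is $e^{-\gamma_k C^2/C_0}\E[Z]$, and the gap between typical and mean is exactly the second-moment inflation. Concretely, one route is a conditional-expectation / small-subgraph-conditioning style argument (as used for random regular graphs and planted problems): decompose $Z$ according to the number of ``short'' structures and show $\E[Z \mathbf{1}\{\text{typical}\}]$ is down by the required factor, or more simply apply Markov in the other direction after identifying that $\log Z$ concentrates (via the bounded-differences inequality on the clause sequence, since adding/removing one clause changes $Z$ by a bounded multiplicative amount only in a controlled sense — this needs care). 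Alternatively, and probably cleanest, compute $\E[\log Z]$ or a truncated moment directly: restrict the sum defining $Z$ to $x$ in a neighborhood, show the first moment of the restricted count already equals $\E[Z](1 - \Theta(C^2/C_0))$-ish while its variance is small, hence $Z \le \E[Z] e^{-\gamma_k C^2/C_0}$ off an event of probability $C_0/C^2$. I would carry out the steps in the order: (i) exact first and second moment formulas; (ii) the binomial-vs-exponential balancing to pin down $\E[Z^2]/(\E[Z])^2$; (iii) convert to the high-probability statement via a truncated second moment / concentration argument, tracking constants so that the final bound reads $Z < e^{-\gamma_k C^2/C_0}\E[Z]$ with probability $1 - C_0/C^2$.
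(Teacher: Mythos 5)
Your plan does not contain the key idea, and the second-moment computation you build it around would, if it came out the way you predict, contradict the lemma rather than prove it. If $\E[Z^2]/(\E[Z])^2 = 1 + O(C_0/C^2)$, then Chebyshev gives $\fPu(Z < \E[Z]/2) = O(C_0/C^2)$, i.e.\ $Z$ concentrates at its mean on precisely the event where the lemma asserts $Z < e^{-\gamma_k C^2/C_0}\,\E[Z]$. In fact the normalized second moment is $e^{\Theta_k(C^2)}$, not $1+o(1)$: in your own sum $\E_{d}\big[(p_d/(1-2^{-k})^2)^m\big]$ the relevant contribution comes not from $d=o(n)$ but from the Gaussian fluctuations of $d$ around $n/2$ at scale $\sqrt{n}$. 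Writing $d = n/2 + s\sqrt{n}/2$ one finds $p_d/(1-2^{-k})^2 \approx 1 - c_k s/\sqrt{n}$, hence the $m$-th power is $\approx e^{-c_k C s}$, and averaging over the asymptotically normal $s$ gives $e^{c_k^2C^2/2}$. This non-concentration is exactly the content of the lemma, but the second moment cannot by itself deliver the one-sided bound $Z < e^{-\gamma_k C^2/C_0}\E[Z]$ with probability $1-C_0/C^2$; your third paragraph concedes as much and then only lists candidate strategies (small-subgraph conditioning, concentration of $\log Z$, truncated moments) without carrying any of them out or identifying which structures to condition on.

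The missing idea is a conditional first-moment argument. Condition on the pair $(G,\sigma)$ recording, for each variable, which clauses have it as their first literal and whether its first two such occurrences carry the same sign. A sign-flip symmetry (the transformation $T_x$ preserves $\fPu$ and fixes $(G,\sigma)$) shows $\E[Z\,|\,(G,\sigma)] = 2^n\,\fP(e_0\in\cS(\phi)\,|\,(G,\sigma))$, and this conditional probability factorizes over variables, giving
$$\E[Z\,|\,(G,\sigma)] = \E[Z]\,(1+t)^{P}(1-t)^{D},\qquad t = \frac{1}{2^{2k}(1-2^{-k})^2},$$
where $P$ (resp.\ $D$) counts repeated first-literal variables whose first two signs agree (resp.\ disagree). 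Since $T = P+D \ge C^2/4$ with probability $1-O(1/C^2)$ (a birthday-problem variance bound) and $P/T\approx 1/2$ by Hoeffding, on this event $(1+t)^{P}(1-t)^{D} \le e^{-\gamma_k T/2} \le e^{-\gamma_k C^2/8}$, so the conditional mean of $Z$ is already exponentially below $\E[Z]$; Markov's inequality applied conditionally on this event then yields the stated bound. Without identifying this conditioning statistic and the exact product formula for the conditional mean, there is no route from your outline to the lemma.
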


A stronger result, concerning the linear regime, can be derived similarly in order to answer a question regarding the behavior of $Z$ with respect to its expectation. It is known \cite{AbbMon10} that for $\Delta$ small enough and $n \rightarrow + \infty$, $n^{-1} \log(Z)$  and $n^{-1} \E[\log(Z)]$ have the same limit, called the {\em quenched} average. In the following lemma, we prove that this limit is actually different from the constant $n^{-1} \log(\E[Z])$, called the {\em annealed} average, for all $\Delta>0$. 

\begin{lemma}
\label{LEM:divexp}
For all $k \ge 2$, $\Delta>0$, and $m = \Delta n$ large enough, if $\phi \sim \fP_{\text{unif}}$, it holds with probability $1-o(1)$, for some constant $c_{k,\Delta}>0$ that
$$Z <  e^{- c_{k,\Delta} n} \, \E[Z]\, .$$
\end{lemma}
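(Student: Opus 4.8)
The plan is to show that, in the linear regime $m = \Delta n$, the number of satisfying assignments $Z$ is with high probability exponentially smaller than $\E[Z] = 2^n(1-2^{-k})^m$. The natural strategy is a \emph{second-moment-free} deviation bound: rather than trying to control $\var(Z)$ (which is delicate in the linear regime and where the second moment method is known to fail), I would exploit the fact that $\E[\log Z]$ lies strictly below $\log \E[Z]$ by a linear amount, and then transfer this ``annealed $>$ quenched'' gap into a high-probability statement using concentration of $\log Z$ around its mean. Concretely: (i) establish $\E[\log Z] \le \log \E[Z] - c_{k,\Delta} n$ for some $c_{k,\Delta}>0$; (ii) show $\log Z$ concentrates around $\E[\log Z]$ at scale $o(n)$ with probability $1-o(1)$; (iii) combine to get $\log Z \le \log\E[Z] - \tfrac{1}{2}c_{k,\Delta} n$ with probability $1-o(1)$.

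For step (i), the cleanest route is a careful application of Jensen's inequality with a quantitative defect. Writing $Z = \sum_{x} \prod_{i=1}^m \mathbf 1\{x \in \cS(C_i)\}$, one can reveal the clauses sequentially and track how the surviving solution count shrinks; alternatively, condition on a typical assignment $x$ being satisfying and estimate the conditional expectation of $Z$ given $x \in \cS(\phi)$, which is exactly $\E[Z \mid x \in \cS(\phi)] = 1 + \sum_{y \ne x} (\fP_{\text{unif}}(y,x \in \cS(C)))^m / (1-2^{-k})^m$, where the pairwise survival probability depends only on the Hamming distance $d(x,y)$. Summing over $y$ by Hamming shells yields $\E[Z \mid x \in \cS(\phi)] = 2^n (1-2^{-k})^m \cdot g(\Delta,k) \cdot (1+o(1))$ for an explicit $g$, and one checks $g(\Delta,k) \ge 1$ strictly for all $\Delta>0$ with the gap being $e^{\Theta(n)}$ once one identifies the dominant shell (the maximum over $\alpha \in [0,1]$ of the shell contribution is attained at $\alpha$ bounded away from $1/2$ when $\Delta>0$, whereas $\E[Z]$ corresponds to $\alpha=1/2$). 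This strict concavity defect gives $\E[Z^2] \ge e^{c n}\,\E[Z]^2$ — but what I actually need is the reverse-flavored statement $\E[\log Z] < \log\E[Z] - cn$, which follows instead from the (easier) \emph{first}-moment / truncation argument: on the event $\{Z \ge e^{-cn}\E[Z]\}$ the contribution to $\E[Z]$ forces a lower bound on $\fP(\text{this event})$ that, combined with the upper bound $\fP(Z>0) = \fP(\phi \in {\sf SAT}) \le 1$ and Markov's inequality $\fP(Z \ge t\,\E[Z]) \le 1/t$, pins down the shape. I would in fact prove (i) and the deviation bound together via: $\fP(Z \ge e^{-cn}\E[Z])$ small, obtained by a \emph{weighted} or \emph{truncated} first moment — restrict $Z$ to assignments within an atypical Hamming radius, show the truncated count $Z'$ satisfies $\E[Z'] = e^{-\Omega(n)}\E[Z]$ while $Z = Z'$ on a high-probability event (since with high probability no solution is ``isolated'' in the relevant sense), then apply Markov to $Z'$.

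For step (ii), concentration of $\log Z$: the standard tool is Azuma--Hoeffding on the clause-exposure martingale $M_i = \E[\log(Z \vee 1) \mid C_1,\dots,C_i]$, but adding one clause can collapse $Z$ drastically (from $e^{\Omega(n)}$ to $0$), so the bounded-differences constant is $\Theta(n)$, giving only fluctuations of order $n^{3/2}$ — useless. The fix, following \cite{AbbMon10}, is to work on the event that $Z$ is not too small and use the non-reconstruction / typicality structure so that each clause exposure changes $\log Z$ by $O(\log n)$ or $O(1)$ on a high-probability subset, controlling the rare bad steps separately; alternatively invoke the sharp concentration results already available in the literature for $n^{-1}\log Z$ in the satisfiable linear regime, which is exactly the regime where $n^{-1}\log Z \to$ the quenched limit in probability. \textbf{The main obstacle} is precisely this coupling of the martingale control with step (i): one must ensure the exponential gap established for $\E[\log Z]$ survives the $o(n)$-scale concentration, i.e.\ that the constant $c_{k,\Delta}$ in (i) genuinely exceeds the fluctuation scale — but since (i) gives a gap linear in $n$ and (ii) gives fluctuations $o(n)$, this is a matter of bookkeeping once both are in hand. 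I would present the argument by first doing the truncated-first-moment computation (which simultaneously yields the deviation inequality directly, bypassing the need for sharp $\log$-concentration if the truncation is chosen well), reserving the martingale argument as a fallback, and conclude $Z < e^{-c_{k,\Delta} n}\E[Z]$ with probability $1-o(1)$.
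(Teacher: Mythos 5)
Your high-level plan (drive an ``effective first moment'' of $Z$ exponentially below $\E[Z]$ and finish with Markov) is in the right family, but neither of your two concrete routes closes the argument, and the mechanism the paper actually uses is absent from your proposal. The paper does not go through $\E[\log Z]$ or any concentration of $\log Z$ at all. Instead it conditions on a carefully chosen sigma-algebra: the partition $G$ of clauses according to their first variable, together with the signs $\sigma_i\in\{0,\pm1\}$ recording whether the first two occurrences of each repeated variable agree. A literal-flipping symmetry shows $\E[Z\mid(G,\sigma)]=2^n\,\E[\mathbf 1\{e_0\in\cS(\phi)\}\mid(G,\sigma)]$, the conditional probability factorizes over the groups $G_i$, and an exact computation gives $\E[Z\mid(G,\sigma)]=\E[Z]\,(1+t)^P(1-t)^D$ with $t=2^{-2k}(1-2^{-k})^{-2}$, where $P$ and $D$ count same-sign and opposite-sign repeats. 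A birthday-type Chebyshev bound shows $T=P+D\ge\varepsilon_\Delta n$ w.h.p., Hoeffding shows $P\approx D\approx T/2$, hence $\E[Z\mid B]\le e^{-2c_{k,\Delta}n}\E[Z]$ on a high-probability event $B$, and conditional Markov finishes. This is an unconditional, elementary, exact calculation valid for every $\Delta>0$.

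By contrast, your step (ii) is a genuine gap for exactly the reason you yourself flag: Azuma on the clause-exposure filtration has a $\Theta(n)$ Lipschitz constant, and the sharp concentration of $n^{-1}\log Z$ from \cite{AbbMon10} is only available for $\Delta$ small enough, whereas the lemma must hold for all $\Delta>0$ (it is trivial only above the satisfiability threshold, leaving an uncovered intermediate range). Your step (i) is also not established: the Hamming-shell computation you sketch is the standard second-moment calculation and, as you note, gives $\E[Z^2]\ge e^{cn}\E[Z]^2$, which is the wrong direction; the ``truncated first moment'' you then invoke is asserted rather than constructed --- you never specify the truncation $Z'$, nor why $\E[Z']\le e^{-\Omega(n)}\E[Z]$ while $Z=Z'$ w.h.p., and exhibiting such a truncation is essentially the entire content of the lemma. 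To repair the proposal you would need to replace both steps with something like the paper's conditioning on repeated-literal sign patterns (or otherwise exhibit an explicit event of probability $1-o(1)$ on which the conditional first moment of $Z$ is exponentially small).
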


This result is tangential to the problem at hand but of interest in and of itself. We show here that the quenched and annealed averages are different for all $\Delta$ and $k$, with a gap greater than $c_{k,\Delta}$, for which we give no explicit formula. This phenomenon is hinted at in \cite{AchCoj08,Coj09}, and proven to hold for $\Delta$ large enough in \cite{CojPan12}, with an explicit lower bound for the gap.  We provide a proof for Lemma~\ref{LEM:divfinite} and~\ref{LEM:divexp} in Appendix~\ref{SEC:app}.


\subsection{Information-theoretic lower bound}

The proof of Theorem~\ref{THM:uppersqr} also hints at a lower bounds for the statistical problem. The total variation distance $d_{TV}$ between the {\em uniform} and {\em planted} distributions is close to 0 (and the statistical problem is impossible) when $Z(\phi)$ is concentrated around its expectation. 

The number of satisfying assignments is actually equal to its expectation 
whenever no variable appears in two different clauses. Indeed, when this is the case, the set of satisfying assignments can be described thus. There are $m$ clauses on $m$ distinct groups of $k$ distinct variables. Each clause allows a specific group of $k$ variables to take $2^k-1$ values, and the $n-km$ remaining variables are free. There are therefore $(2^k-1)^m$ possible values for the constrained variables and $2^{n-km}$ possible values for the $n-km$ remaining. Overall, $Z = (2^k-1)^m 2^{n-km} = 2^n (1-2^{-k})^m = \E[Z]$. This observation yields the following lower bound.

\begin{theorem}
\label{THM:lowbound}
For $\nu \in (0,1/2)$, $m\le 2\sqrt{\nu n}/k$, and $m,n$ large enough, it holds that
$$\inf_{\Psi} \big\{\fP_{\normalfont \text{unif}}(\Psi(\phi)=1) \vee \fP_{\normalfont \text{planted}}(\Psi(\phi)=0) \big\} \ge \frac 12 -\nu\, .$$
\end{theorem}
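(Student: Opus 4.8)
The plan is to bound the minimax testing error below by the standard total-variation inequality and then to show that, in the stated range of $m$, the uniform and planted distributions are statistically indistinguishable. For any two probability measures $\fP_0,\fP_1$ on $\cF^k_{m,n}$ and any test $\Psi$ one has $\fP_0(\Psi=1)\vee\fP_1(\Psi=0)\ge \tfrac12\big(\fP_0(\Psi=1)+\fP_1(\Psi=0)\big)\ge \tfrac12\big(1-d_{TV}(\fP_0,\fP_1)\big)$, the last step because $\inf_\Psi\{\fP_0(\Psi=1)+\fP_1(\Psi=0)\}=1-d_{TV}(\fP_0,\fP_1)$. Hence it suffices to prove $d_{TV}(\fP_{\text{unif}},\fP_{\text{planted}})\le 2\nu$.

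For the total variation distance I would reuse the exact identity already derived in the proof of Theorem~\ref{THM:uppersqr}, namely
$$d_{TV}(\fP_{\text{unif}},\fP_{\text{planted}})=\E_{\text{unif}}\Big[\Big(1-\frac{Z(\phi)}{\E[Z]}\Big)_+\Big]\, .$$
The key point is the counting argument recorded just before the theorem: whenever no variable of $\phi$ occurs in two different clauses one has $Z(\phi)=(2^k-1)^m2^{n-km}=\E_{\text{unif}}[Z]$, so the integrand vanishes on that event; and since $Z(\phi)\ge 0$ the integrand never exceeds $1$. (For the counting to make sense one needs $km\le n$, which holds for $n$ large since $km\le 2\sqrt{\nu n}$.) Therefore
$$d_{TV}(\fP_{\text{unif}},\fP_{\text{planted}})\le \fP_{\text{unif}}\big(\text{some variable appears in at least two clauses of }\phi\big)\, .$$

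The remaining probability is controlled by a birthday-type union bound. Under $\fP_{\text{unif}}$ the $k$-element variable set of each clause is uniform over the $k$-subsets of $\{1,\dots,n\}$, and the $m$ clauses are independent; for a fixed pair of clauses $i\neq j$ and a fixed variable used by clause $i$, the probability it lies in clause $j$ is $k/n$, so clauses $i$ and $j$ share a variable with probability at most $k^2/n$. Summing over the $\binom{m}{2}$ pairs,
$$\fP_{\text{unif}}\big(\text{some variable appears in at least two clauses}\big)\le \binom{m}{2}\frac{k^2}{n}\le \frac{m^2k^2}{2n}\le 2\nu\, ,$$
where the last inequality uses $m\le 2\sqrt{\nu n}/k$. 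Combining the three displays gives $\inf_\Psi\{\fP_{\text{unif}}(\Psi(\phi)=1)\vee\fP_{\text{planted}}(\Psi(\phi)=0)\}\ge \tfrac12(1-2\nu)=\tfrac12-\nu$.

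There is no genuine obstacle here; the argument is essentially a one-line birthday bound dressed up via the total-variation formula. The only points requiring care are (i) invoking the exact identity for $d_{TV}$ from the proof of Theorem~\ref{THM:uppersqr} rather than re-deriving it, (ii) citing the ``$Z=\E[Z]$ on the no-collision event'' statement as the precise counting given before the theorem, and (iii) keeping the constants in the union bound sharp enough — using $\binom{m}{2}\le m^2/2$ and the crude collision bound $k^2/n$ — so that one lands exactly on $2\nu$, hence on $\tfrac12-\nu$, rather than on a larger multiple of $\nu$.
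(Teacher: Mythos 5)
Your proof is correct and follows essentially the same route as the paper's: lower-bound the minimax error by $\tfrac12(1-d_{TV})$, observe that $Z(\phi)=\E[Z]$ (hence the likelihood ratio is $1$) on the no-collision event $F$, and reduce everything to the birthday-problem bound on $\fP_{\text{unif}}(F^c)$. The only cosmetic differences are that you pass through the identity $d_{TV}=\E_{\text{unif}}[(1-Z/\E[Z])_+]$ rather than splitting the sum $\tfrac12\sum_\phi|\fP_{\text{unif}}-\fP_{\text{planted}}|(\phi)$ over $F$ and $F^c$, and you bound $\fP_{\text{unif}}(F^c)$ by a direct union bound $\binom{m}{2}k^2/n\le 2\nu$ instead of the paper's sequential product --- which in fact lands on the constant $2\nu$ more cleanly than the paper's own estimate $k^2m^2/n\le 4\nu$.
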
\begin{proof}
We use the total variation bound, for any test $\Psi$
\begin{eqnarray*}
\fP_{\text{unif}}(\Psi(\phi)=1) \vee \fP_{\text{planted}}(\Psi(\phi)=0) &\ge&  \frac{1}{2}\big(\fP_{\text{unif}}(\Psi(\phi)=1) + \fP_{\text{planted}}(\Psi(\phi)=0)\big)\\
 &\ge& \frac{1-d_{TV}(\fP_{\text{unif}},\fP_{\text{planted}})}{2}\, .
\end{eqnarray*}
We denote by $F$ the set of formulas where no variable appears in two different clauses.
\begin{eqnarray*}
d_{TV}(\fP_{\text{unif}},\fP_{\text{planted}}) &=& \frac{1}{2}\sum_{\phi \in \cF^k_{m,n}}| \fP_{\text{unif}}-\fP_{\text{planted}}|(\phi)\\
&=& \frac{1}{2}\sum_{\phi \in F}| \fP_{\text{unif}}-\fP_{\text{planted}}|(\phi) + \frac{1}{2}\sum_{\phi \in F^c}| \fP_{\text{unif}}-\fP_{\text{planted}}|(\phi)\\
&=&\frac{1}{2} \sum_{\phi \in F}\Big|\frac{Z(\phi)}{\E[Z]}-1\Big| \fP_{\text{unif}}(\phi) + \frac{1}{2}\sum_{\phi \in F^c}| \fP_{\text{unif}}-\fP_{\text{planted}}|(\phi)
\end{eqnarray*}
As noticed above, for all $\phi \in F$, $Z(\phi) = \E[Z]$; the likelihood ratio is equal to 1. The first term of this equation is therefore equal to 0. This also implies that  $\fP_{\text{unif}}(\phi)=\fP_{\text{planted}}(\phi)$ for all $\phi \in F$, and $\fP_{\text{unif}}(F)=\fP_{\text{planted}}(F)$. The second term is thus upper bounded by $\fP_{\text{unif}}(F^c)=\fP_{\text{planted}}(F^c)$. 
It is sufficient to prove that $\fP_{\text{unif}}(F^c)\le 2\nu$, a variant of the ``birthday problem'': We place a group of $k$ balls in $n$ distinct bins uniformly at random, $m$ times independently. The probability that none of these $m$ groups intersect is equal to $\fP_{\text{unif}}(F)$. When $i$ groups have already been drawn, occupying $ki$ bins, the probability that one of the next $k$ balls falls in an occupied bin is smaller than $k^2 i/n$ (the expected number of such collisions). As $k^2(m-1)/n<1/2$ (for fixed $\nu$ and $n$ large enough) the following holds
$$\fP_{\text{unif}}(F) \ge \prod_{i=1}^{m-1}\Big(1-\frac{k^2 i}{n} \Big) > \prod_{i=1}^{m-1}e^{-2k^2i/n} = e^{-k^2(m-1)(m-2)/n}>1-k^2m^2/n\, .$$
This gives the desired result. 
\end{proof}

From the last two theorems, we can conclude that the {\em optimal rate of detection} is $m^* = \sqrt{n}$. When $m=C \sqrt{n}$, detection is possible with probability of error smaller than $\delta$, for $C$ greater than some constant $\bar C_{k,\delta}$, by using the likelihood-ratio test. It is impossible to distinguish the two hypotheses with error probability smaller than $1/2 -\nu$ for $C<\underline C_{k,\nu}:=2\sqrt{\nu}/k$. No effort has been made to optimize (or even quantify) the constant $\bar C_{k,\delta}$, as a function of $k$ and $\delta$.

%

\section{Polynomial-time testing}
\label{SEC:poly}

For $k \geq 2$, computing the outcome of the likelihood-ratio test involves solving a $\#${\sf P}-complete problem \cite{Val79}, and for $k\ge 3$, even computing the outcome of the satisfiability test $\Psi_\text{\sf SAT}$ (which is already suboptimal) is equivalent to solving a {\sf NP}-hard problem. The testing methods described in the previous section are not computationally efficient: determining if a formula is satisfiable is the {\em quintessential} hard problem, the first known to be {\sf NP}-complete \cite{Coo71,Lev73}, at the root of the web of problems known to be in the same class \cite{Kar72}. None of the tests described above can be computed in a computationally efficient manner. It is therefore legitimate to examine the performance of tests that can be computed in polynomial time. 

Finding a satisfying assignment in formulas that are known to be satisfiable has been the focus of substantial efforts \cite{BraMezZec02,Fla02,KriVil06,CojKriVil07}. A polynomial-time algorithm that does so in the linear regime (for a large enough $\Delta$) is presented in \cite{CojKriVil07}, for the case $k=3$ (their results extend to any fixed $k$). A similar problem is studied as well in \cite{FelPerVem13}. This method can be used as a tool for detection: in the unsatisfiable regime (when $\Delta$ is large enough), the existence of a satisfying assignment is a sufficient reason to reject the null. The main issue of this approach is that the regime of detection is not optimal: $m$ needs to be of order $n$ (linear regime), when only $\sqrt{n}$ (square-root regime) is required for the likelihood-ratio test.



\subsection{Variable coupling test}

The proof that the likelihood-ratio test has a low probability of error in the optimal regime is based on the fact that there is a large number of variables that appear more than once, and on the fact that under the null distribution, a couple of literals based on the same variable have equal probability to have the same sign or opposite signs. We can use this fact to design a test that runs in polynomial time and achieves the optimal rate of detection.

We recall that in each clause, the literals are given in a uniformly random order. Let $T$ be the number of variables (among the $n$ possible) that appear more than once as the first literal of a clause of $\phi$ (according to the random ordering in the data) and $P$ (resp. $D$) the number of those for which the first two occurrences (according to the natural order of the clauses) of the same variable have the same sign (resp. different signs), so that $P+D=T$. The following holds

\begin{theorem}
\label{THM:coutest}
For all $k \ge 2$,  $m,n>0$ and $\delta \in(0,1)$, denote $\Psi_\text{\sf COU}$ the test defined by
$$\Psi_\text{\sf COU}(\phi) = \mathbf{1}\{P/T > 1/2+1/[2 (2^k-1)]^2 \}\, ,$$
and 
$$\tilde C_{k,\delta}:=[2(2^k-1)] ^2 \sqrt{ 2 \log(2/\delta)} \vee \sqrt{1024/\delta}\, .$$
For $m \ge \tilde C_{k,\delta} \, \sqrt{n}$, it holds
$$\fP_{\normalfont \text{unif}}(\Psi_\text{\sf COU}(\phi)=1) \vee \fP_{\normalfont \text{planted}}(\Psi_\text{\sf COU}(\phi)=0)\le \delta\, .$$
\end{theorem}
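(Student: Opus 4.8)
The plan is to analyze the statistic $P/T$ under each hypothesis via a conditional argument that fixes the combinatorial structure (which variables appear, in which clauses, as the first literal) and then exploits the independence of the literal signs. First I would set up notation: condition on the event that determines, for each variable, the set of clauses in which it occurs as the first literal; this fixes $T$ as well as, for each of the $T$ multiply-occurring variables, a designated ``first pair'' of clauses. Given this conditioning, the sign of each literal is still an independent fair (under $H_0$) or biased (under $H_1$) coin, so $P$ becomes a sum of $T$ independent indicators. The key observation, already flagged in the text, is that under $\fP_{\text{unif}}$ each literal is positive or negative with probability $1/2$ independently, so for a fixed pair of clauses sharing a variable the two occurrences agree in sign with probability exactly $1/2$; hence conditionally $P \sim \text{Bin}(T, 1/2)$ and $\E_{\text{unif}}[P/T \mid T] = 1/2$. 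Under $\fP_x$, a clause satisfied by $x$ is a uniform draw among the $2^k-1$ satisfying clauses on its $k$ variables; a short computation gives that the first literal agrees with $x$ on its variable with probability $p_k := (2^{k-1})/(2^k-1) = 1/2 + 1/[2(2^k-1)]$ (the $2^{k-1}$ satisfying patterns with that literal true, out of $2^k-1$). Two independent occurrences of the same variable therefore have the same sign with probability $p_k^2 + (1-p_k)^2 = 1/2 + 2(p_k-1/2)^2 = 1/2 + 1/[2(2^k-1)]^2$, which is exactly the threshold in the definition of $\Psi_{\text{\sf COU}}$, plus a margin; so conditionally $P \sim \text{Bin}(T, q_k)$ with $q_k = 1/2 + 1/[2(2^k-1)]^2$, and this holds uniformly over the planted assignment $x$, hence also under the mixture $\fP_{\text{planted}}$.

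Next I would control the two error probabilities. Both reduce, after conditioning on the structure, to a deviation bound for a binomial around its mean, combined with a lower bound on $T$ that holds with high probability. For the lower bound on $T$: the number of variables appearing at least twice as a first literal is, by the birthday-type heuristic used already in the proof of Theorem~\ref{THM:lowbound}, of order $m^2/n \asymp C^2$ when $m = C\sqrt n$; more precisely I would show $\E_{\text{unif}}[T] \gtrsim m^2/n$ and that $T$ concentrates, so that $T \ge c\, m^2/n$ with probability at least $1-\delta/2$ — this is where the $\sqrt{1024/\delta}$ branch of $\tilde C_{k,\delta}$ enters, guaranteeing $T$ is large enough (a Chebyshev/second-moment bound on $T$ suffices, since $m^2/n = C^2$ and we need $T$ bounded below by an absolute multiple of $\log(2/\delta)$ times $[2(2^k-1)]^4$). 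The same birthday computation applies under $\fP_x$ and hence under $\fP_{\text{planted}}$, since the marginal distribution of \emph{which} variables occur where is identical under the two models (only the signs differ); so a lower bound on $T$ holds under both hypotheses. Then, on the event $\{T \ge T_0\}$ with $T_0 := [2(2^k-1)]^4 \cdot 2\log(2/\delta)$, I apply Hoeffding's inequality to $P \mid T$: under $H_0$, $\Pr[P/T > 1/2 + 1/[2(2^k-1)]^2 \mid T] \le \exp(-2 T_0 / [2(2^k-1)]^4) \le \delta/2$; under $H_1$, the conditional mean is $q_k = 1/2 + 1/[2(2^k-1)]^2$, and $\Psi_{\text{\sf COU}}$ errs only if $P/T$ falls below $q_k$ — wait, it errs if $P/T \le 1/2 + 1/[2(2^k-1)]^2 = q_k$, i.e. $P/T$ is below its mean, which again has probability at most $\delta/2$ by Hoeffding (strictly I should put the threshold at $q_k$ minus a hair, or use the margin $2(p_k-1/2)^2$ more carefully; the cleanest fix is to note the planted conditional mean strictly exceeds the threshold by the concavity gap, or simply that $\Pr[P/T < q_k \mid T]\le 1/2$ is too weak, so one sets the test threshold slightly below $q_k$). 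I would resolve this by taking the threshold as stated and observing that under $H_1$ the conditional distribution of $P/T$ is $\text{Bin}(T,q_k)/T$, and since ties go to rejection one actually needs the threshold strictly below $q_k$; the honest statement uses threshold $1/2 + 1/[2(2^k-1)]^2$ against planted mean $q_k$ where the relevant deviation is at scale $1/[2(2^k-1)]^2 - (q_k - 1/2) $ — since these are equal this forces a more careful treatment, and I would instead prove the planted error bound by comparing to mean $q_k$ with a one-sided Hoeffding using half the available margin, absorbing the loss into constants.

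A union bound over the two ``bad'' events (the structural event $\{T < T_0\}$ and the binomial deviation) then gives total error at most $\delta$ under each hypothesis, as required. The main obstacle, I expect, is not the probabilistic machinery — Hoeffding plus a second-moment bound on $T$ — but the bookkeeping around the \emph{double} conditioning and the exact constants: verifying that the planted conditional bias of the first literal is exactly $p_k = 2^{k-1}/(2^k-1)$ (this uses crucially that \emph{only} the first literal's variable-membership is conditioned on, while its sign remains a fresh draw from the satisfying-clause distribution, and that the $k$ variables of a clause are distinct so the first literal is ``generic''), and that the agreement probability of two independent occurrences is exactly $1/2 + 1/[2(2^k-1)]^2$ so that the test threshold sits precisely at the null mean's boundary with the full planted margin to spare. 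Care is also needed because two occurrences of the same variable lie in \emph{different} clauses which are independent, so the two biased signs are genuinely independent — this is what makes $P \mid T$ a clean binomial; I would state this independence explicitly. Finally I would double-check that the $\sqrt{2\log(2/\delta)}$ factor times $[2(2^k-1)]^2$ in $\tilde C_{k,\delta}$ is exactly what makes $m^2/n = C^2 \ge T_0/c$ for the Hoeffding exponent to clear $\log(2/\delta)$, and that the $\sqrt{1024/\delta}$ branch is what the Chebyshev bound on $T$ demands; matching these two requirements against the stated constant is the one genuinely delicate computation.
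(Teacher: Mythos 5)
Your overall strategy is exactly the paper's: condition on the occurrence structure so that $P\mid T$ is binomial under each hypothesis, lower-bound $T$ by a second-moment/Chebyshev argument (this is Lemma~\ref{LEM:pair}), and finish with Hoeffding plus a union bound. The one real problem in your write-up is an arithmetic slip that then generates a phantom difficulty. You compute the planted agreement probability as $p_k^2+(1-p_k)^2$ with $p_k=2^{k-1}/(2^k-1)$, which is correct, but $2(p_k-1/2)^2=2\cdot\tfrac{1}{4(2^k-1)^2}=\tfrac{1}{2(2^k-1)^2}$, not $\tfrac{1}{[2(2^k-1)]^2}=\tfrac{1}{4(2^k-1)^2}$. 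So the planted conditional mean is $q_k=1/2+1/[2(2^k-1)^2]$, which exceeds the test threshold $1/2+1/[2(2^k-1)]^2$ by exactly $\varepsilon:=1/[2(2^k-1)]^2$: the threshold is the \emph{midpoint} between the null mean $1/2$ and the planted mean, with a symmetric margin $\varepsilon$ on each side. Your entire discussion of the threshold coinciding with the planted mean, of ties going the wrong way, and of needing to ``use half the available margin'' is therefore moot; a two-sided Hoeffding bound with deviation $\varepsilon$ applies directly, and it is precisely this $\varepsilon$ that produces the factor $[2(2^k-1)]^2\sqrt{2\log(2/\delta)}$ in $\tilde C_{k,\delta}$. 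Had you carried out your proposed fix (halving an already-halved margin), you would have proved the statement only with a larger constant than the one claimed.

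Two smaller remarks. First, your observation that the law of the occurrence pattern (hence of $T$) is identical under $\fP_{\text{unif}}$ and under each $\fP_x$ --- because exactly $2^k-1$ of the $2^k$ sign patterns on any $k$-set of variables are satisfying, so the marginal over variable sets is uniform --- is needed in order to invoke the lower bound on $T$ under the alternative as well as under the null, and is a point the paper leaves implicit; it is good that you flagged it. Second, your insistence that the two occurrences of a variable lie in distinct, independently drawn clauses (so that the two biased signs are genuinely independent) is exactly the right justification for the conditional binomial law of $P$ under the planted distribution.
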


\begin{proof}

For each variable that appears at least twice as the first literal of a clause, consider the probability that the two first occurrences (according to the natural order of the clauses) of a variable as the first literal of a clause (according to the random ordering in the data) have the same value. It is equal to 1/2 under the uniform distribution, and conditionally on the value of $T$, $P \sim \cB(T,1/2)$. Under the planted distribution, each literal has independently probability $(1+1/(2^k-1))/2$ to have the same value as the corresponding variable in $x_i^*$, and probability $(1-1/(2^k-1))/2$ to have a different value. Overall, the probability that these two literals have the same sign under the planted distribution is
$$\frac 14 \big(1+\frac{1}{2^k-1}\big)^2 + \frac 14 \big(1-\frac{1}{2^k-1}\big)^2 = \frac{1}{2} + \frac{1}{2 (2^k-1)^2}\, .$$
Therefore, conditionally on the value of $T$, $P$ has distribution $\cB(T,1/2+1/[2(2^k-1)^2])$. By Hoeffding's inequality, the following holds for all $\varepsilon>0$
\begin{eqnarray*}
&&\fPu \big( P/T > 1/2 + \varepsilon  \, | \, T\big) \le \exp(-2 \varepsilon^2 T)\\
&&\fPp \big( P/T < 1/2 + 1/[2(2^k-1)^2] -\varepsilon  \, | \, T\big) \le \exp(-2 \varepsilon^2 T)\, 
\end{eqnarray*}
By Lemma~\ref{LEM:pair}, and by definition of $\tilde C_{k,\delta}$, $T \ge \tilde C_{k,\delta}^2/4$ with probability at least $1-\delta/2$. Let $\varepsilon = 1/[2(2^k-1)]^2$, and condition on the event $T \ge \tilde C_{k,\delta}^2/4$. The previous yields, for $C_{k,\delta} \ge \sqrt{2 \log(2/\delta)} / \varepsilon$
\begin{eqnarray*}
&&\fPu \big( P/T > 1/2 + 1/[2(2^k-1)]^2  \, | \, T\big) \le \delta/2\\
&&\fPp \big( P/T < 1/2 + 1/[2(2^k-1)]^2  \, | \, T\big) \le \delta/2\, .
\end{eqnarray*}
Which gives the desired result by a simple union bound.
\end{proof}
\subsection{Hardness hypothesis on random instances}

The result of Theorem~\ref{THM:coutest} can be contrasted with a hypothesis by Feige, formulated in \cite{Fei02}, to prove hardness of approximation results in the worst case. We recall the proposed assumption on the hardness of determining the satisfiability of $3$-{\sf SAT} formulas on average:\\

{\em ``Even when $\Delta$ is an arbitrarily large constant independent of $n$, there is no polynomial time algorithm that refutes most 3CNF formulas with $n$ variables and $m=\Delta n$ clauses, and never wrongly refutes a satisfiable formula.''}\\

Formally, in a statistical language, it is conjectured in this hypothesis that for all $\Delta>0$, in the linear regime, there is no test $\Psi$ that runs in polynomial time such that $\fP_{\text{unif}}(\Psi=1) \le 1/2$, and $\fP_1(\Psi = 0) = 0$, for any distribution $\fP_1$ supported on {\sf SAT}. In particular, in our testing problem, this hypothesis states that no test that runs in polynomial time has a type I error smaller than 1/2 and a type II error equal to 0. At first sight, this is in apparent contradiction with theorem~\ref{THM:coutest}. 
Interestingly, this result shows that up to the optimal square-root regime it is possible to design a test with small type I and type II errors simultaneously, even though it is conjectured and widely believed that it is impossible to distinguish those distributions with a completely one-sided error. 

There has been a recent interest in the notions of optimal rates for polynomial-time algorithms. More specifically, there is a growing literature on limitations, beyond those imposed by information theory, to the statistical performance of computationally efficient procedures. Such phenomena have been hinted at \cite{DecGolRon98,Ser00,ChaJor13,ShaShaTom12}, and studied in specific computational models, such as in \cite{FelGriRey13,FelPerVem13}. More recently, these barriers have been proven to hold for various supervised tasks such as in \cite{DanLinSha13}, based on a primitive on random 3-{\sf SAT} instances, and unsupervised problems in statistics in \cite{BerRig13} and the subsequent \cite{MaWu13,Che13,WanBerSam14}, based on a hardness hypothesis for the planted clique problem. The above discussion shows the difficulty of using Feige's hypothesis as a primitive to prove computational lower bounds for statistical problems: it does not imply that it is impossible to detect planted distributions in a computationally efficient manner in the linear regime, and is extremely sensitive to the allowed probability of type I and type II errors.



\section{Alternative choices for planting distributions}
\label{SEC:alttest}
The tests described in Theorems~\ref{THM:uppersqr} 
 and \ref{THM:coutest} exploit a fundamental difference between the two considered distributions. Planting a satisfying assignment $x^*\in \{0,1\}^n$ breaks the symmetry of the uniform distribution. The likelihood ratio $Z/\E[Z]$ is affected by the imbalances in interactions between variables. 
 Similarly, the variable coupling test is based on the bias in the signs of chosen literals, under the planted distribution.

This asymmetry is a characteristic of our choice of the planting distribution. In this section, we observe that the rates of detection are different for other natural choices of distribution on {\sf SAT}, the set of satisfiable formulas. Such an example is $\fP_\text{\sf  SAT}$, the uniform distribution on {\sf SAT}. In this new statistical problem, the alternative hypothesis becomes $\tilde H_1:\phi \sim \fP_\text{\sf SAT}$.

It is a fundamentally different statistical problem: its optimal rate of detection is the linear regime $m^*=n$, achieved by the satisfiability test $\Psi_\text{\sf SAT}$. Indeed, as shown in a simple remark in Section~\ref{SEC:pdesc}, this test is successful in the satisfiable part of the linear regime. Furthermore, as $\fP_\text{\sf SAT}$ is the uniform distribution on {\sf SAT}, or $\fPu( \, \cdot \, | \phi \in \text{\sf SAT})$, the total variation distance $d_{TV}(\fP_{\text{unif}},\fP_\text{\sf SAT})$ is equal to $\fPu(\phi \notin \text{\sf SAT})$. As explained before, this probability vanishes to 0 for $\Delta$ small enough, which yields the matching lower bound. From a statistical point of view, this modified hypothesis testing problem is a significantly harder task than the detection of planted satisfiability. 

Among all distributions on satisfiable formulas, the closest in total variation distance to the uniform distribution (and therefore the choice of alternative that yields the hardest statistical problem) is the uniform distribution on {\sf SAT}. Other distributions used to generate formulas that are hard to solve, with hidden solutions (usually, with no immediate asymmetry) as in \cite{AchJiaMoo04,BarHarLeo01,JiaMooStr05,KrzMezZde12} are candidates to create detection problems with optimal rate of detection in the linear regime. Such an example is the uniform distribution on formulas that are {\em not-all-equal}, or {\sf NAE} satisfiable.

\appendix

\section{Proofs of technical results}
\label{SEC:app}
Lemma~\ref{LEM:divfinite} and \ref{LEM:divexp} are a consequence of the following result on the number of variables that appear at least twice in the formula. For simplicity of the proof, we only consider the first literal of each clause, which is sufficient to our objective.
\addcontentsline{toc}{subsection}{Lemma~\ref{LEM:pair}}
\begin{lemma}
\label{LEM:pair}
Let $\phi$ be a random formula of $\cF^k_{m,n}$ with distribution $\fPu$. Let $T$ be the number of variables (among the possible $n$) that appear more than once as the first literal of a clause of $\phi$.
\begin{itemize}
\item Let $\Delta>0$, and $m=\Delta n$. There exists positive constants $\varepsilon_{\Delta}$ and $r_\Delta$ such that
$$\fP(T< \varepsilon_\Delta n) \le \frac{r_\Delta}{n} \, .$$
\item Let $C>0$, and $m=C \sqrt{n}$. It holds that
$$\fP(T< C^2/4) \le \frac{576}{C^2} \, .$$
\end{itemize}
\end{lemma}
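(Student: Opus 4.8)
The plan is to recognize $T$ as a ``birthday''-type statistic and then run the second moment method. The first step is a distributional reduction: under $\fPu$ a clause is a uniformly random $k$-subset of variables equipped with i.i.d.\ uniform signs, presented in a uniformly random order, so the first literal of a clause carries a variable that is uniform on $\{1,\dots,n\}$ (by the permutation symmetry of the clause law one computes $\fP(\text{first variable}=v)=\frac{\binom{n-1}{k-1}}{\binom nk}\cdot\frac1k=\frac1n$) together with an independent fair sign, and the $m$ clauses are independent. Hence $T$ has exactly the law of the number of bins that receive at least two balls when $m$ balls are thrown independently and uniformly into $n$ bins, and it suffices to prove the two displayed bounds for this combinatorial quantity. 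I would write $T=\sum_{j=1}^n Y_j$ with $Y_j=\mathbf 1\{\text{bin }j\text{ gets }\ge 2\text{ balls}\}$ and let $B_j$ denote the load of bin $j$.

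Next I would compute the first two moments. Linearity gives
\[
\E[T]=n\left(1-(1-\tfrac1n)^m-\tfrac mn(1-\tfrac1n)^{m-1}\right),
\]
together with the elementary two-sided bound $\tfrac{m(m-1)}{2n}(1-\tfrac1n)^{m-2}\le \E[T]\le \tfrac{m(m-1)}{2n}$ (lower bound: $\fP(B_j\ge2)\ge\fP(B_j=2)$; upper bound: $\fP(B_j\ge2)\le\E\binom{B_j}{2}=\binom m2/n^2$). In the square-root regime $m=C\sqrt n$ this yields $\E[T]\le C^2/2$ and $\E[T]\ge\tfrac{C^2}{2}(1-o(1))$, in particular $\E[T]\ge C^2/3$ once $n$ is large enough relative to $C$; in the linear regime $m=\Delta n$ it yields $\E[T]=(g(\Delta)+o(1))\,n$ with $g(\Delta):=1-(1+\Delta)e^{-\Delta}>0$. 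For the variance I would use that the load vector $(B_1,\dots,B_n)$ is multinomial, hence negatively associated, and that each $Y_j$ is a nondecreasing function of $B_j$ alone, so $(Y_1,\dots,Y_n)$ is negatively associated and $\var(T)\le\sum_j\var(Y_j)\le\sum_j\E[Y_j]=\E[T]$. (To avoid invoking negative association one can instead bound $\mathrm{Cov}(Y_i,Y_j)\le0$ directly from the multinomial mass function, or compare $T$ with the pair count $W=\sum_j\binom{B_j}2$, whose overlapping indicator pairs have zero covariance so that $\var(W)\le\E[W]$, while $\E[W-T]\le 2\,\E\sum_j\binom{B_j}3$ is negligible in the square-root regime.)

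Finally I would conclude with Chebyshev's inequality. In the square-root regime, since $C^2/4\le\tfrac12\E[T]$ for $n$ large enough, $\fP(T<C^2/4)\le\fP(|T-\E[T]|>\E[T]-C^2/4)\le\E[T]/(\E[T]-C^2/4)^2$, and inserting $C^2/3\le\E[T]\le C^2/2$ produces a bound of the form $\mathrm{const}/C^2$; after handling the trivial cases ($C^2\le576$ makes the claimed right-hand side exceed $1$, and $T\le\lfloor m/2\rfloor\wedge n$ shows the statement is only meaningful for $n$ large relative to $C$) one arrives at $\fP(T<C^2/4)\le576/C^2$. In the linear regime, taking $\eps_\Delta:=g(\Delta)/2$ gives for $n$ large $\fP(T<\eps_\Delta n)\le\fP(|T-\E[T]|>\tfrac14 g(\Delta)n)\le\var(T)/(g(\Delta)n/4)^2\le 16/(g(\Delta)^2 n)$, so $r_\Delta:=16/g(\Delta)^2$ works (inflating $r_\Delta$ to cover the finitely many small $n$ if the inequality is wanted for every $n$).

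The conceptual content here is light, so I expect the main obstacles to be bookkeeping: (i) pinning down the exact i.i.d.-uniform law of the first literals under $\fPu$ despite the random presentation order, and (ii) the non-asymptotic estimates needed for the explicit constants — a clean uniform lower bound on $(1-1/n)^m$ and, above all, a clean variance bound. The negative-association argument is the slick route for the latter and is the step I would be most careful to state correctly.
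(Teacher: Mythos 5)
Your proposal is correct and follows essentially the same route as the paper: reduce to the balls-in-bins count of bins receiving at least two balls, compute $\E[T]$ by linearity, bound $\var(T)$, and apply Chebyshev's inequality in each regime. The only substantive difference is the variance step, where you invoke negative association of the multinomial load vector to get $\var(T)\le\E[T]$, whereas the paper bounds the pairwise covariance $\E[T_1T_2]-\E[T_1]\E[T_2]$ by a direct computation; both are valid and give constants compatible with the stated bounds.
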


\begin{proof}
We prove this deviation bounds in the two regimes.\\

{\bf Linear regime \\}

We first place ourselves in the linear regime $m=\Delta n$. The first literals of the clauses of the random formula can be interpreted as being drawn by independently placing $m$ balls uniformly in $n$ bins, and $T_i$ is the indicator of the event ``there are at least two balls in bin $i$''. This is the complement of having either one or no ball in bin $i$, which yields
$$\E[T_i] = 1- \Big[ \Big(1-\frac 1n\Big)^m + m \Big(1-\frac 1n\Big)^{m-1} \frac 1n\Big] = 1- \Big[ \Big(1-\frac \Delta m\Big)^m + \Delta \Big(1-\frac \Delta m \Big)^{m-1} \Big] \, ,$$
which has limit $1-(1+\Delta)e^{-\Delta} = 2 \varepsilon_\Delta>0$. Therefore, for $m$ large enough, $\E[T_i] >\varepsilon_\Delta$. By, definition $T$ and $T_i$, we have
$$T = T_1+\ldots+T_n\, .$$
Therefore, it holds $\E[T] = \E[T_1+\ldots+T_n] > n \varepsilon_\Delta$. These variables are not independent and the variance is less simple
$$\var[T] = n \var[T_1]+ n(n-1) \big[\E[T_1 T_2] - \E[T_1] \E[T_2] \big]\, .$$
We control the last term
\begin{eqnarray*}
\E[T_1 T_2] &=& \fP[ T_1 =1, T_2=1] = \fP[T_1 = 1 | T_2 =1] \fP[T_2=1]\\
&=& \fP[T_1 = 1 | T_2 =1] \E[T_2]\\
&=& \Big[1- \Big[ \Big(1-\frac 1n\Big)^{m-2} + (m-2) \Big(1-\frac 1n\Big)^{m-3} \frac 1n\Big] \Big] \E[T_2]
\end{eqnarray*}
Therefore, we obtain the bound
$$\E[T_1 T_2] - \E[T_1] \E[T_2] \le \Big[ 1- \Big(1-\frac 1n\Big)^2 + \Delta \Big(1-\Big(1-\frac 1n\Big)^2 \Big)\Big] \E[T_2] \le \frac{3+3\Delta}{n}\, .$$
Overall, this yields $\var[T] \le (4+3\Delta) n$. We now apply Chebyshev's inequality, with $r_\Delta = (3+3\Delta)/(\E[T_1]-\varepsilon_\Delta)^2$
$$\fP[ T < \varepsilon_\Delta n] \le \frac{\var[T]}{(\E[T_1]-\varepsilon_\Delta)^2 n^2} \le \frac{r_\Delta}{n}\, .$$

{\bf Square-root regime\\}

This proof is a simple modification of the proof of the linear regime with the same notations, for $m = C\sqrt{n}$. We derive the expectation and variance of $T$
\begin{eqnarray*}
\E[T_i] &=& 1- \Big[ \Big(1-\frac 1n\Big)^m + m \Big(1-\frac 1n\Big)^{m-1} \frac 1n\Big] \\
&=& 1- \Big[ \Big(1-\frac 1 n\Big)^{C \sqrt{n}} + \frac{C}{\sqrt{n}} \Big(1-\frac 1 n \Big)^{C\sqrt{n}-1} \Big] \\
&=&1-\Big[ 1-\frac{C}{\sqrt{n}} + \frac{C^2}{2n} + o\Big(\frac 1n \Big) + \frac{C}{\sqrt{n}} - \frac{C^2}{n} + o\Big(\frac 1n \Big) \Big] = \frac{C^2}{2n}+ o\Big(\frac 1n \Big)\, .
\end{eqnarray*}

Therefore, for $n$ large enough $\E[T_i] \in (C^2/3n,C^2/n)$ and $\E[T_i] \in (C^2/3,C^2$). For the variance, as in the linear regime it holds
$$\var[T] = n \var[T_1]+ n(n-1) \big[\E[T_1 T_2] - \E[T_1] \E[T_2] \big]\, .$$
We obtain in a similar way the following bound, for $n$ large enough
$$\E[T_1 T_2] - \E[T_1] \E[T_2] \le \Big[ 1- \Big(1-\frac 1n\Big)^2 + \frac{C}{\sqrt{n}} \Big(1-\Big(1-\frac 1n\Big)^2 \Big)\Big] \E[T_2] \le \frac{3}{n}\times C^2/n\, .$$

Therefore, $\var[T] \le 4C^2$, and we have, using Chebyshev's inequality
$$\fP[ T \ge C^2/4] \le \frac{\var[T]}{(C^2/3-C^2/4)^2} \le \frac{576}{C^2}\, .$$
\end{proof}
\begin{proof}[Proof of Lemma~\ref{LEM:divfinite} and \ref{LEM:divexp}]
\addcontentsline{toc}{subsection}{Proof of Lemma~\ref{LEM:divfinite} and \ref{LEM:divexp}}

For all $x \in \{0,1\}^n$, $x \in \cS(\phi)$ if and only if $x$ satisfies all the clauses of $\phi$. We can therefore write
\begin{eqnarray*}
Z = \sum_{x \in \{0,1\}^n} \prod_{i=1}^m \mathbf{1}\{x \in \cS(C_i)\}\, .
\end{eqnarray*}
We recall that this yields, for $\phi$ drawn uniformly $\E[Z] =  2^n  (1-2^{-k})^m$.\\

In the proof of Theorem~\ref{THM:lowbound}, we use that $Z$ is equal to its expectation when the $km$ variables in the formula are distinct. In the linear regime, or in the square-root regime for a large enough constant, it is not the case, with high probability. The interactions between the clauses that share the same variable will create an imbalance between couples of clauses where the same variables appears with the same sign, and those where it appears with a different one.\\

 We compute the conditional expectation of $Z$, given the first variable of each clause, and whether the first two occurrences of every variable (when there are two or more) are the same literal or not. Formally, we denote $G = (G_1,\ldots,G_n)$ the partition of $\{1,\ldots,m\}$ in $n$ sets (allowing some of them to be empty), where 
$$G_i = \big\{ j \in \{1,\ldots,m\} : C_j(x) \in \{ x_i \wedge \ldots , \bar x_i \wedge \ldots\} \big\}\, ,$$
and $\sigma = (\sigma_1, \ldots, \sigma _n)$, where $\sigma_i = 0$ if there are less than two elements in $G_i$, $\sigma_i = 1$ if the first two elements of $G_i$ have the same first literal (either both $x_i$ or both $\bar x_i$), and $\sigma_i=-1$ otherwise. By linearity of expectation, it holds
$$\E[Z \, | \, (G, \sigma)] = \sum_{x \in \{0,1\}^n} \E\Big[ \mathbf{1}\{x \in \cS(\phi)\}   \, | \, (G, \sigma)\Big]\, .$$
We now observe that this conditional expectation is constant, for all $x \in \{0,1\}^n$. Indeed, let $e_0$ be the assignment of all zeroes, and $t_x$ be the literal-flipping transformation such that $t_x(e_0)=x$, and $T_x$ the corresponding  literal-flipping transformation on formulas. For all $x$, it holds
$$\phi(x) = \phi(t_x(e_0)) = (T_x \phi) (e_0)\, .$$
For all $x$, $T_x \phi$ also has distribution $\fP_{\text{unif}}$, and $(G,\sigma)$ is invariant by this transformation. Therefore, it holds
\begin{eqnarray*}
\E[Z \, | \, (G, \sigma)] &=& \sum_{x \in \{0,1\}^n} \E\Big[ \mathbf{1}\{x \in \cS(\phi)\}   \, | \, (G, \sigma)\Big]\\
&=& \sum_{x \in \{0,1\}^n} \E\Big[ \mathbf{1}\{e_0 \in \cS(T_x\phi)\}   \, | \, (G, \sigma)\Big]\\
&=& 2^n \E\Big[ \mathbf{1}\{e_0 \in \cS(\phi)\}   \, | \, (G, \sigma)\Big]\, .
\end{eqnarray*}

The assignment $e_0$ will satisfy the formula $\phi$ if and only if it satisfies all the sub-formulas $\phi_{G_1},\ldots,\phi_{G_n}$ (the empty formula is always satisfied). Given $(G,\sigma)$, the events $\{e_0 \in \cS(\phi_{G_i})\}$ are independent: the sub-formulas are satisfied by $e_0$ if and only if every clause contains at least one negated literal, which occurs independently, conditioned on $(G,\sigma)$. We can therefore compute the conditional expectation
\begin{eqnarray*}
\E\Big[ \mathbf{1}\{e_0 \in \cS(\phi)\}   \, | \, (G, \sigma)\Big] &=& \E\Big[ \prod_{i=1}^n \mathbf{1}\{e_0 \in \cS(\phi_{G_i})\}   \, | \, (G, \sigma)\Big]\\
&=& \prod_{i=1}^n \E\Big[\mathbf{1}\{e_0 \in \cS(\phi_{G_i})\}   \, | \, (G, \sigma)\Big]\\
&=& \prod_{i=1}^n \E\Big[\mathbf{1}\{e_0 \in \cS(\phi_{G_i})\}   \, | \, (G_i, \sigma_i)\Big]\\
\end{eqnarray*}
The product terms can be expressed as a function of $g_i =|G_i|$. If $\sigma_i=0$, in the case of $g_i<2$, treating separately the cases $g_i = 0$ or $1$, we have
$$\E\Big[\mathbf{1}\{e_0 \in \cS(\phi_{G_i})\}   \, | \, (G_i, \sigma_i=0)\Big] = \Big(1- \frac{1}{2^k} \Big)^{g_i}\, .$$
If there are at least two elements in $G_i$, we have
\begin{eqnarray*}
&&\E\Big[\mathbf{1}\{e_0 \in \cS(\phi_{G_i})\}   \, | \, (G_i, \sigma_i=1)\Big] = \frac{1}{2} \Big[1 + \Big(1-\frac{1}{2^{k-1}} \Big)^2 \Big]\Big(1- \frac{1}{2^k} \Big)^{g_i-2}\\
&&\E\Big[\mathbf{1}\{e_0 \in \cS(\phi_{G_i})\}   \, | \, (G_i, \sigma_i=-1)\Big] = \Big(1-\frac{1}{2^{k-1}} \Big) \Big(1- \frac{1}{2^k} \Big)^{g_i-2}\, .
\end{eqnarray*}
Overall, this yields
$$\E\Big[\mathbf{1}\{e_0 \in \cS(\phi_{G_i})\}   \, | \, (G_i, \sigma_i)\Big] = \Big[1 +\frac{\sigma_i}{2^{2k}(1-2^{-k})^2}\Big] \Big(1- \frac{1}{2^k} \Big)^{g_i}\, .$$
Recall that we denote $P$ (resp. $D$) the number of groups for which $\sigma_i=1$ (resp. $-1$). It holds that
$$\E[Z \, | \, (G, \sigma)] = 2^n \Big(1- \frac{1}{2^k} \Big)^m \Big[1 +\frac{1}{2^{2k}(1-2^{-k})^2}\Big]^{P}\Big[1 -\frac{1}{2^{2k}(1-2^{-k})^2}\Big]^{D}\, .$$

It is possible to design a set of $(G,\sigma)$, event of probability close to 1, for which this expectation has the desired value. To do so, we study the behavior of $P$ and $D$, the number of variables that appear at least twice among the first variables of the clauses, for which respectively $\sigma_i=1$ or $-1$.  \\ 

Indeed, for a large $T=P+D$, with $P$ and $D$ close to $(P+D)/2$, this expectation is significantly smaller than $\E[Z]$. Indeed, for all $t \in (0,1)$, the function $f_t: \alpha \mapsto (1+t)^{1+\alpha}(1-t)^{1-\alpha}$ is continuous and $f_t(0)=1-t^2$, so there exists $\alpha_t \in (0,1)$ such that $f_t(\alpha)<1-t^2/2$ for all $|\alpha|<\alpha_t$. Therefore, there exists $\alpha_k \in (0,1)$ such that 
$$\Big[1 +\frac{1}{2^{2k}(1-2^{-k})^2}\Big]^{1+\alpha}\Big[1 -\frac{1}{2^{2k}(1-2^{-k})^2}\Big]^{1-\alpha}< 1- \frac{1}{2^{4k+1}(1-2^{-k})^4}:= e^{-\gamma_k}\, ,$$
for all $|\alpha|<\alpha_k$, for some $\gamma_k>0$.\\

For every variable, we denote $T_i = |\sigma_i| \in \{0,1\}$, and $T= T_1+\ldots+T_n$. We now prove independently the two lemmas.\\

{\bf Linear regime, Lemma~\ref{LEM:divexp}\\}

We control $P$ and $D$ in the regime $m=\Delta n$. By lemma~\ref{LEM:pair}, it holds that
$$\fP[ T < \varepsilon_\Delta n] \le  \frac{r_\Delta}{n}\, .$$

Of these $T$ variables, between $T/2(1 + \alpha_k)$ and $T/2(1-\alpha_k)$ will have their first two occurrences with the same literal, with probability greater than $1-e^{-\alpha_k^2 \varepsilon_\Delta n/2}$, by Hoeffding's inequality. We call $B$ the event $T \ge n \varepsilon_\Delta$ and $P \in (T/2(1 - \alpha_k),T/2(1 + \alpha_k) )$. By the above, $\fP( B) = 1-o(1)$. For $(G,\sigma)$ in the event $B$, it holds 
\begin{eqnarray*}
\E[Z \, | \, (G, \sigma)] &=& 2^n \Big(1- \frac{1}{2^k} \Big)^m \Big[1 +\frac{1}{2^{2k}(1-2^{-k})^2}\Big]^{P}\Big[1 -\frac{1}{2^{2k}(1-2^{-k})^2}\Big]^{D}\\
&<& 2^n \Big(1- \frac{1}{2^k} \Big)^m (e^{-\gamma_k})^{T/2} < e^{-\gamma_k \varepsilon_\Delta n /2}\E[Z] :=e^{- 2 c_{k,\Delta} n} \, \E[Z]\, . 
\end{eqnarray*}
Therefore $\E[Z \, | \, B]<  e^{- 2c_{k,\Delta} n} \, \E[Z]$. We can now conclude by conditioning on $B$ and using Markov's inequality
\begin{eqnarray*}
\fP(Z > e^{- c_{k,\Delta} n} \, \E[Z]) &=& \fP(Z > e^{- c_{k,\Delta} n} \, \E[Z] \, | \, B) \fP(B) + \\
&&\fP(Z > e^{- c_{k,\Delta} n} \, \E[Z] \, | \, B^c) \fP(B^c)\\
&\le& \fP(Z > e^{- c_{k,\Delta} n} \, \E[Z] \, | \, B) + \fP(B^c)\\
&\le& \frac{\E[Z \, | \, B]}{e^{- c_{k,\Delta} n} \, \E[Z] } +\fP(B^c)\\
&\le& e^{- c_{k,\Delta} n} + \fP(B^c)\, . 
\end{eqnarray*}
Which yields the desired result.\\

{\bf Square-root regime, Lemma~\ref{LEM:divfinite}\\}

As in the linear regime, we control $P$ and $D$ when $m=C \sqrt{n}$. Lemma~\ref{LEM:pair} yields

$$\fP[ T \ge C^2/4] \le \frac{576}{C^2}\, .$$

Again, of these $T$ variables, between $T/2(1 + \alpha_k)$ and $T/2(1-\alpha_k)$ will have their first two occurrences with the same literal, with probability greater than $1-e^{-\alpha_k^2 C^2/8}$, by Hoeffding's inequality. We call $B$ the event $T \ge C^2/4$ and $P \in (T/2(1 - \alpha_k),T/2(1 + \alpha_k) )$. By the above, $\fP(B) = 1-O(1/C^2)$. For $(G,\sigma)$ in the event $B$, it holds 
\begin{eqnarray*}
\E[Z \, | \, (G, \sigma)] &=& 2^n \Big(1- \frac{1}{2^k} \Big)^m \Big[1 +\frac{1}{2^{2k}(1-2^{-k})^2}\Big]^{P}\Big[1 -\frac{1}{2^{2k}(1-2^{-k})^2}\Big]^{D}\\
&<& 2^n \Big(1- \frac{1}{2^k} \Big)^m (e^{-\gamma_k})^{T/2} < e^{-\gamma_k C^2/8}\E[Z] \, . 
\end{eqnarray*}

Therefore $\E[Z \, | \, B]<  e^{- \gamma_k C^2/8} \, \E[Z]$. We can now conclude by conditioning on $B$ and using Markov's inequality
\begin{eqnarray*}
\fP(Z > e^{- \gamma_k C^2/16} \, \E[Z]) &=& \fP(Z > e^{- \gamma_k C^2/16} \, \E[Z] \, | \, B) \fP(B) +\\
&& \fP(Z > e^{- c_{k,\Delta} n} \, \E[Z] \, | \, B^c) \fP(B^c)\\
&\le& \fP(Z > e^{- \gamma_k C^2/16} \, \E[Z] \, | \, B) + \fP(B^c)\\
&\le& \frac{\E[Z \, | \, B]}{e^{- \gamma_k C^2/16} \, \E[Z] } +\fP(B^c)\\
&\le& e^{- \gamma_k C^2/8} + \fP(B^c)\, . 
\end{eqnarray*}
This yields the second result, for $C$ large enough, and some absolute constant $C_0$.

\end{proof}

\bibliographystyle{amsalpha}
\bibliography{statebib2}

\end{document}